\def\ps@pprintTitle{%
 \let\@oddhead\@empty
 \let\@evenhead\@empty
 \def\@oddfoot{\centerline{\thepage}}%
 \let\@evenfoot\@oddfoot}
  \let\div\relax
  \DeclareMathOperator{\div}{div}
\def\R{{\mathbb R}}
\def\dee{{\rm d}}
\def\e{{\rm e}}
\def\:{\colon}
\newtheorem{theorem}{Theorem}[section]
\newtheorem{corollary}[theorem]{Corollary}
\newtheorem{lemma}[theorem]{Lemma}
\newtheorem{definition}[theorem]{Definition}
\newtheorem{remark}[theorem]{Remark}
\def\dist{{\rm dist}}
\newcommand{\be}{\begin{equation}}
\newcommand{\ee}{\end{equation}}
\newcommand{\ba}{\begin{eqnarray}}
\newcommand{\bs}{\begin{eqnarray*}}
\newcommand{\ea}{\end{eqnarray}}
\newcommand{\es}{\end{eqnarray*}}
\newcommand{\bi}{\begin{itemize}}
\newcommand{\ei}{\end{itemize}}
\newcommand{\A}{{\alpha}}
\newcommand{\eps}{{\epsilon}}
\newcommand{\p}{{\partial}}
\newcommand{\Om}{{\Omega}}
\newcommand{\PO}{{\cal{P}}}
\newcommand{\BC}{{\cal{B}}}
\newcommand{\LO}{{\cal{L}}}
\newcommand{\B}{{\beta}}
\newcommand{\Omb}{\overline{\Om}}
\begin{document}

\begin{frontmatter}

\author[RL]{R.\ Laister\corref{thing}}
\cortext[thing]{Corresponding author}
\ead{Robert.Laister@uwe.ac.uk}
\address[RL]{Department of Engineering Design and Mathematics,  University of the West of England,\\ Bristol BS16 1QY, UK.}
\author[warwick]{J.C.\ Robinson}
\ead{J.C.Robinson@warwick.ac.uk}
\address[warwick]{Mathematics Institute, Zeeman Building, University of Warwick,\\ Coventry CV4 7AL, UK.}
\author[warsaw]{M.\ Sier{\.z}\polhk{e}ga}
\ead{M.Sierzega@mimuw.edu.pl}
\address[warsaw]{Faculty of Mathematics, Informatics and Mechanics,  University of Warsaw,\\  Banacha 2, 02-097 Warsaw, Poland.}


\title{A Necessary and Sufficient Condition for Uniqueness of the Trivial Solution in  Semilinear Parabolic Equations}

\begin{abstract}
 In their (1968) paper  Fujita and Watanabe   considered the issue of uniqueness of the trivial solution  of semilinear parabolic equations with respect to the class of bounded, non-negative solutions. In particular they showed that if the underlying ODE has non-unique solutions (as characterised via an Osgood-type condition) {\em and} the  nonlinearity $f$ satisfies  a concavity condition, then the parabolic PDE also inherits the non-uniqueness property. This concavity assumption has remained in place either implicitly or explicitly in all subsequent work in the literature relating to this and other, similar,  non-uniqueness phenomena in parabolic equations. In this paper we provide an elementary proof of non-uniqueness for the PDE without any such concavity assumption on $f$. An important consequence of our result is that uniqueness of the trivial solution  of the PDE is equivalent to uniqueness of the trivial solution  of the corresponding ODE, which in turn is known to be equivalent to an Osgood-type integral condition on $f$.
\end{abstract}

\begin{keyword}
Semilinear\sep parabolic \sep  Osgood\sep non-uniqueness\sep uniqueness\sep lower solution.
\end{keyword}
\end{frontmatter}

\section{Introduction}

We consider the issue of uniqueness (with respect to bounded solutions) of the trivial solution of the semilinear parabolic  problem
\bs
(\text{P})\quad\left\{
\begin{array}{rlll}
u_t  & = \LO u+ q(x)f(u) & \text{in} &    Q_T:=\Om\times (0,T),\\
\BC u & =  0  & \text{in}  & \partial\Om\times (0,T],\\
u(x,0) & =  0  & \text{in}  & \Omb ,
\end{array}
\right.
\es
where $\LO$ is a uniformly elliptic operator,  $\BC$ is a boundary operator, $q(x)\ge 0$  and   $f(0)=0$. The domain $\Omega\subset\R^d$ ($d\ge 1$) is  bounded with  boundary $\p\Om$ of class $C^{2+\A}$, so that classical parabolic regularity and maximum principles apply. Without loss of generality we also assume that $\Om$  contains the origin. As in Fujita and Watanabe \cite{FW} we take $\LO$  of the form
\be
\LO u=\sum_{i,j=1}^d a_{ij}(x){u}_{x_ix_j}+\sum_{j=1}^d  b_{j}(x){u}_{x_j}+c(x)u
\label{L}
\ee
where $a_{ij}$ is symmetric and satisfies the uniform ellipticity condition
\be
k |y|^2\le \sum_{i,j=1}^d a_{ij}(x)y_iy_j\le |y|^2/k,\qquad \forall x\in\Om ,\ \forall y\in\R^d
\label{unifell}
\ee
for some $k>0$. The boundary operator in (P) is given by
\be
\BC u:= \B (x) \frac{\p u}{\p\nu} +(1-\B (x))u,\label{B}
\ee
where  $0\le \B(x)\le 1$ and $\p u/\p\nu$ is the  conormal derivative
\be
\frac{\p u}{\p\nu}(x)= \sum_{i,j=1}^d {u}_{x_i}(x)a_{ij}(x)n_j(x) \label{conormal}
\ee
with $n(x)=(n_1(x),n_2(x),\ldots ,n_d(x))$ being the unit outer normal at $x\in\p\Om$. The case $\B\equiv 1$ therefore corresponds to Neumann boundary conditions, whilst $\B\equiv 0$ corresponds to  Dirichlet boundary conditions. Other choices of  $\B (x)$  represent  Robin or mixed boundary conditions, which in a certain sense (regarding the ordering of the corresponding heat kernels) is intermediate between the Neumann and Dirichlet cases. We provide precise regularity conditions on the coefficients $\LO$ and $\BC$ in a later section.

Since $f(0)=0$ and the initial data in (P) is zero,  $u=0$ is a solution of both the PDE problem (P) and the ODE problem
 \be
 \dot{u}=f(u),\qquad u(0)=0.\label{ODE}
  \ee
The nonlinearity $f$ is assumed continuous, non-decreasing and positive for $u>0$. For such $f$ it is well known (\cite{Osg}) that $u=0$ is the unique local solution of (\ref{ODE}) if and only if the following Osgood integral condition holds:
\be
\int_0^\eps \frac{\dee u}{f(u)}  =\infty \qquad \text{for some } \eps >0.\label{osgood}
\ee
Our main result is that if the integral condition (\ref{osgood}) does not hold then problem (P) possesses non-unique, non-negative, bounded solutions; see Theorem~\ref{thm:possoln}. An important consequence of our result is that uniqueness of the trivial solution of the PDE (P) is equivalent to uniqueness of the trivial solution  of the ODE (\ref{ODE}). Thus uniqueness of the  trivial solution of  (P)  is equivalent to (\ref{osgood}); see Corollary~\ref{cor:main}.

This problem was considered almost half a century ago by  Fujita and Watanabe
 \cite{FW} (see also \cite{Fuj}). They proved \cite[Theorem 1.4]{FW} that the Osgood condition (\ref{osgood}) is sufficient for uniqueness of the trivial solution in (P) but did not prove  necessity under the same conditions. In order to establish non-uniqueness  when  condition (\ref{osgood}) fails, the authors  imposed an additional concavity assumption on the nonlinearity $f$ \cite[Theorem 1.5]{FW}. We show here that this concavity assumption is not required and thereby obtain a result valid for any increasing function $f$. Given the many works in the literature which have utilised and extended Fujita and Watanabe's non-uniqueness result it is surprising that their concavity assumption has remained until now. We suspect that this may be due to the enthusiasm  for studying  the `model' nonlinearity $f(u)=u^p$, for $0<p<1$, which is of course concave.

There have been several papers subsequent to \cite{FW} providing  non-uniqueness results for parabolic equations of various types,
e.g. with unbounded coefficients \cite{Loa},  degenerate $p$-Laplacian operators \cite{BGKT,BBGKT,HW}, and systems \cite{Bok,DE1,EH1,HW}. However,  all  these works either  assume explicitly that $f$ is concave near zero \cite{CDE,Fuj} or implicitly by working only with nonlinearities of  power law type,  $f(u)=u^p$ ($0<p<1$). To the best of our knowledge non-uniqueness has not been established without assuming concavity of $f$. We remark that non-uniqueness with respect to  {\em unbounded} solutions can also occur in parabolic equations even when the corresponding ODE has unique solutions (e.g. when $f$ is locally Lipschitz) \cite{HW,MT,NS}. However the  non-uniqueness  there is due to the superlinear growth of $f$ at infinity.

\section{Non-Uniqueness of Bounded Solutions}

We state our assumptions on the problem data:
\bi
\item[(H1)] The  coefficients of  $\LO$ in (\ref{L}) satisfy  $a_{ij}\in C^{2+\A}(\Omb )$, $b_j\in C^{1+\A}(\Omb )$, $c\in C^{\A}(\Omb )$ and the uniform ellipticity condition (\ref{unifell}).
\item[(H2)] The coefficient of $\BC$ in  (\ref{B}) satisfies   $0\le \B(x)\le 1$ and  $\B\in  C^{2+\A}(\p\Om )$.
\item[(H3)] $q\ge 0$, $q\not\equiv 0$ and $q\in C(\Omb )$,
\item[(H4)] $f:[0,\infty )\to [0,\infty )$ is continuous, non-decreasing, $f(0)=0$ and $f>0$ on $(0,\infty )$.
\ei

In all that follows $S_{\B}(t):L^{\infty}(\Om )\to L^{\infty}(\Om )$ ($t\ge 0$) denotes the  semigroup generated by $-\LO$ with   boundary conditions $\BC u=0$. It is well-known (e.g. \cite{Aro,Fri}) that one has the  representation formula
\be
[S_{\B}(t)\psi ](x)= \int_{\Om} K_{\B}(x,y;t)\psi (y)\, \dee y,\qquad \psi\in L^{\infty}(\Om ),\label{eq:semi}
\ee
where $K_{\B}$ is the  kernel (synonymously known  as the fundamental solution or parabolic Green's function) associated with $\LO$ with the same boundary conditions. For notational convenience we henceforth  write $S_{D}$ and $K_D$  in the Dirichlet case  $\B\equiv 0$. The open Euclidean ball in $\R^d$, centred at $x$, with radius $R$ will be denoted  $B_R(x)$ and   $\chi_{R}$ denotes the characteristic function on  $B_R:=B_{R}(0)$.

\begin{definition}{
We say that $u$ is a \emph{bounded generalised solution} of {\rm (P)} on $[0,T]$ if $u\in L^{\infty}(Q_T )$, $u\ge 0$ and satisfies
\be
u(x,t)=\int_0^t \int_{\Om} K_{\B}(x,y;t-s)[q(y)f(u(y,s))]\, \dee y\dee  s,\label{eq:integral}
\ee
or equivalently,
\begin{equation}
u(t)=\int_0^t S_{\B}(t-s)[qf(u(s))]\, \dee  s.\label{eq:bounded}
\end{equation}}
\label{def:soln}
\end{definition}

\begin{remark}
If $u$ is a bounded generalised  solution  and $q$ and $f$ are H\"{o}lder continuous on $\Om$ and $(0,M ]$ respectively, where $M=\|u\|_{\infty}$, then $u$ is
a solution of {\rm (P)} in the classical sense by standard parabolic regularity results of De Giorgi-Nash-Moser type and classical Schauder estimates (cf. \cite[Appendix B]{QS}).

\label{rem:classical}
\end{remark}

Clearly $u\equiv 0$ is a classical solution of (P) on $[0,T]$ for any $T>0$.  We will require the following comparison result from \cite{FW}, reformulated
 here for the reader's convenience.

\begin{lemma} \cite[Lemma 2.7]{FW}.
If {\rm (H1)-(H2)} hold  then $ K_{\B}(x,y;t)\ge  K_D(x,y;t)$ for all $x,y\in\Om$, $t>0$. Consequently the corresponding semigroups satisfy
$S_{\B}(t)\psi \ge S_{D}(t)\psi$ for all non-negative $\psi\in L^\infty(\Om )$, $t> 0$.
\label{lem:Sorder}
\end{lemma}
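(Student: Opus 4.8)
The plan is to prove the pointwise kernel inequality by applying the parabolic maximum principle to the difference of the two semigroup orbits, and then to read off the semigroup comparison by integration. Fix a smooth, non-negative $\psi$ and set $v_\B(t)=S_\B(t)\psi$ and $v_D(t)=S_D(t)\psi$; each solves $v_t=\LO v$ in $Q_T$ with $v(\cdot,0)=\psi$, the first satisfying $\BC v_\B=0$ and the second $v_D=0$ on $\p\Om\times(0,T]$. Throughout, whenever the (a priori indefinite) sign of the zeroth-order coefficient $c$ matters, I would absorb it by the substitution $v=\e^{\lam t}\tilde v$ with $\lam\ge\max_{\Omb}c$: this leaves the boundary operator $\BC$ and all signs of the data unchanged while replacing $c$ by the non-positive coefficient $c-\lam$.

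The first step is to record the two facts about $v_D$ needed downstream. By the maximum principle (after the $\e^{\lam t}$ reduction), $v_D\ge 0$ in $Q_T$. Moreover, since $v_D\equiv 0$ on $\p\Om$ its tangential derivatives there vanish, so $\nabla v_D=(\p v_D/\p n)\,n$ at the boundary, where $\p/\p n$ is the ordinary outer normal derivative; because $v_D\ge 0$ attains its boundary value $0$ on $\p\Om$ one has $\p v_D/\p n\le 0$. The conormal derivative is then $\p v_D/\p\nu=(\p v_D/\p n)(n^\top A n)$ with $n^\top A n\ge k>0$ by uniform ellipticity (\ref{unifell}), so $\p v_D/\p\nu\le 0$ on $\p\Om$.

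Next, set $w=v_\B-v_D$. Then $w_t=\LO w$ in $Q_T$, $w(\cdot,0)=0$, and on the boundary $\BC w=\BC v_\B-\BC v_D=-\,\B\,\p v_D/\p\nu\ge 0$, using $\B\ge 0$ and the sign of the conormal derivative just established. It remains to show that the conditions $w_t=\LO w$, $w(\cdot,0)=0$, $\BC w\ge 0$ force $w\ge 0$. After the $\e^{\lam t}$ reduction we may assume $c\le 0$. A negative minimum of $w$ cannot occur at $t=0$; in the interior of $Q_T$ it is ruled out by the strong parabolic maximum principle; and at a boundary point $x_0$ it is ruled out by Hopf's lemma, valid here because $\p\Om$ is of class $C^{2+\A}$ and $\LO$ is uniformly elliptic. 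Indeed, at a boundary minimum Hopf gives strict negativity of the conormal derivative, $\p w/\p\nu(x_0)<0$, whence $\BC w(x_0)=\B(x_0)\,\p w/\p\nu(x_0)+(1-\B(x_0))w(x_0)<0$ if $\B(x_0)>0$, while $\BC w(x_0)=w(x_0)<0$ if $\B(x_0)=0$; either way this contradicts $\BC w\ge 0$. Hence $w\ge 0$, i.e.\ $S_\B(t)\psi\ge S_D(t)\psi$ for smooth non-negative $\psi$.

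Finally I would upgrade this to the pointwise kernel inequality. Fixing $x,y\in\Om$ and $t>0$ and taking non-negative mollifiers $\psi_n$ concentrating at $y$, the representation (\ref{eq:semi}) together with continuity of $y\mapsto K_\B(x,y;t)$ and $y\mapsto K_D(x,y;t)$ for $t>0$ (from the kernel regularity in \cite{Aro,Fri}) yields $K_\B(x,y;t)\ge K_D(x,y;t)$ in the limit; integrating this inequality against an arbitrary non-negative $\psi\in L^\infty(\Om)$ then recovers the semigroup comparison for all such $\psi$, as stated. I expect the main obstacle to be the boundary analysis: making the Hopf-lemma step rigorous for the conormal (rather than the plain normal) derivative, and correctly handling boundary points where $\B(x_0)=0$, i.e.\ the Dirichlet portion of a genuinely mixed boundary, where the oblique-derivative condition degenerates to a pointwise constraint. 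The $\e^{\lam t}$ reduction, needed because $c$ is not assumed sign-definite, is a routine but essential ingredient throughout.
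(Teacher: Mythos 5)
The paper does not actually prove this lemma: it is imported verbatim from Fujita and Watanabe \cite[Lemma 2.7]{FW}, with the semigroup formulation added for convenience. So there is no in-paper argument to compare against; your proposal supplies a self-contained proof, and it is essentially the classical comparison argument one would expect (and close in spirit to what Fujita--Watanabe do at the level of Green's functions). The structure is sound: the reduction $v=\e^{\lam t}\tilde v$ to reach $c\le 0$ is necessary and handled correctly; the observation that $\nabla v_D=(\p v_D/\p n)\,n$ on $\p\Om$, so that $\p v_D/\p\nu=(n^\top A n)\,\p v_D/\p n\le 0$ by ellipticity, is the right way to get the sign of $\BC v_D$; and the exclusion of a negative minimum of $w=v_\B-v_D$ is complete, since an interior minimum propagates constancy back to $t=0$ (contradicting $w(\cdot,0)=0$) and a lateral-boundary minimum is then strict relative to the interior, so the parabolic Hopf lemma applies to the conormal direction (which is genuinely outward because $\nu\cdot n=n^\top An\ge k>0$), contradicting $\BC w\ge 0$ whether $\B(x_0)$ is zero or positive. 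Two points you should make explicit if you write this up: first, the Schauder regularity of $v_D$ up to the lateral boundary for $t>0$ (needed to even speak of $\p v_D/\p\nu$ pointwise on $\p\Om$); second, in the final mollification step, the continuity of $y\mapsto K_\B(x,y;t)$ and $y\mapsto K_D(x,y;t)$ for fixed $t>0$, which you correctly flag as coming from \cite{Aro,Fri}. Neither is a gap, but both are load-bearing. What your route buys over the bare citation is a transparent proof that works uniformly across the Dirichlet, Neumann, Robin and mixed cases via the single operator $\BC$.
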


Our goal is to show that if the Osgood condition (\ref{osgood}) fails then there  exists a non-trivial  subsolution of (P). To achieve this we utilise
a Gaussian lower bound on the Dirichlet  kernel $K_D$ due to Aronson \cite{Aro}. There the author considered more general linear parabolic operators of the form
\be
\PO u=u_t-\sum_{i,j=1}^d \left(A_{ij}(x,t){u}_{x_i}\right)_{x_j}
-\sum_{j=1}^d  \left(A_{j}(x,t)u\right)_{x_j}
-\sum_{j=1}^d  B_{j}(x,t){u}_{x_j}
-C(x,t)u\label{eq:PO}
\ee
under fairly minimal regularity assumptions on the  coefficients. In the special case where
\ba
A_{ij}(x,t) & = & a_{ij}(x), \quad   A_j(x,t)=0, \quad  B_j(x,t)=b_j(x)-\sum_{i=1}^d  \left( a_{ij}(x)\right)_{x_i}, \label{coeff1}\\
C(x,t) & = & c(x)-\sum_{j=1}^d
 \left( b_{j}(x)\right)_{x_j}-\sum_{i,j=1}^d \left( a_{ij}(x)\right)_{x_ix_j}\label{coeff2}
\ea
then the operator $\PO$ reduces to that of $\p /\p t - \LO$ in problem (P). In particular, if assumption (H1) holds then the coefficients
of $\PO$ given by (\ref{coeff1}-\ref{coeff2}) are all H\"{o}lder continuous and thus certainly bounded. This, together with (\ref{unifell}), ensure that the
results in \cite{Aro} are applicable to (P) with Dirichlet boundary conditions. For similar estimates in the special case where $\LO$ is the Laplacian
operator with homogeneous Dirichlet or Neumann boundary conditions see \cite{vdB89,vdB90} for a more concise treatment.

\begin{lemma} Assume {\rm (H1)} holds and $c(x)\le 0$. Let $r\in (0,1)$ be such that   $B_{3r}\subset\Om$ and let $\delta =\dist (B_{2r},\p\Om )>0$.
Then there exists a constant $\kappa =\kappa (d, \LO ,\delta )>0$   such that
\be
S_{D}(t)\chi_{r}\ge \kappa \chi_{r},\qquad \forall t\in (0,r^2/8].\label{Sbound1}
\ee
\label{lem:SD}
\end{lemma}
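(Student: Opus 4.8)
The plan is to invoke Aronson's Gaussian lower bound on the Dirichlet kernel $K_D$ and then reduce the estimate to an elementary volume computation. For $x\notin B_r$ the claimed inequality is trivial, since $\chi_{r}(x)=0$ while $K_D\ge0$ forces $[S_D(t)\chi_{r}](x)\ge0$; so I would fix $x\in B_r$ and, writing out (\ref{eq:semi}) in the Dirichlet case, study
\be
[S_D(t)\chi_{r}](x)=\int_{B_r}K_D(x,y;t)\,\dee y .
\ee
The observation that keeps $\kappa$ independent of $r$ is that $B_{3r}\subset\Om$ forces $\delta=\dist(B_{2r},\p\Om)\ge r$, so the whole time range $t\in(0,r^2/8]$ lies well inside the scale $t\lesssim\delta^2$ on which the interior Dirichlet kernel is comparable to the free-space kernel.

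First I would record the lower bound itself. Since (H1) holds and, via (\ref{coeff1})--(\ref{coeff2}), the reduced operator $\PO$ has H\"older (hence bounded) coefficients together with the sign condition $c\le0$, Aronson's estimate \cite{Aro} applies to $K_D$ and supplies constants $c_1,c_2>0$, depending only on $d$, $\LO$ and $\delta$, with
\be
K_D(x,y;t)\ge c_1\,t^{-d/2}\,\e^{-c_2|x-y|^2/t},\qquad x,y\in\overline{B_{2r}},\ \ 0<t\le r^2/8 .
\ee
The dependence on $\delta$ rather than on $r$ is arranged by fixing the admissible time scale as $t\le\delta^2/8$ (legitimate because $r^2/8\le\delta^2/8$) and by noting that every point of $\overline{B_{2r}}$ sits at distance at least $\delta$ from $\p\Om$, so that the boundary correction $\Gamma-K_D$ between the free kernel $\Gamma$ and the Dirichlet kernel is exponentially small on this scale.

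Next I would discard all of the integral except the part on which the Gaussian is bounded below. Restricting to $E_x:=B_{\sqrt t}(x)\cap B_r$ and using $|x-y|\le\sqrt t$ there,
\be
[S_D(t)\chi_{r}](x)\ge c_1\,t^{-d/2}\,\e^{-c_2}\,|E_x| .
\ee
It then remains to bound $|E_x|$ below by a fixed multiple of $t^{d/2}$, uniformly in $x\in B_r$, which is purely geometric. Since $\sqrt t<r$, if $|x|\ge\sqrt t/2$ the ball $B_{\sqrt t/2}(x')$ with $x'=x-(\sqrt t/2)\,x/|x|$ lies in both $B_{\sqrt t}(x)$ and $B_r$, whereas if $|x|<\sqrt t/2$ the ball $B_{\sqrt t/2}(0)$ does; in either case $|E_x|\ge 2^{-d}\omega_d\,t^{d/2}$, where $\omega_d=|B_1(0)|$. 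Substituting this gives
\be
[S_D(t)\chi_{r}](x)\ge c_1\,\e^{-c_2}\,2^{-d}\omega_d=:\kappa>0 ,
\ee
a constant depending only on $d$, $\LO$ and $\delta$, which is exactly (\ref{Sbound1}).

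The genuine obstacle I would expect is the uniform lower bound in the second display: one must be certain that Aronson's constants can be taken to depend on the single geometric quantity $\delta$ (and not on $r$ or on $x$) and that they hold uniformly as $t\downarrow0$. The scale-invariant form $t^{-d/2}\e^{-c_2|x-y|^2/t}$ is precisely what makes the factors $t^{\pm d/2}$ cancel in the last line, so the resulting $\kappa$ is independent of both $t$ and $r$; checking that the Dirichlet boundary correction is negligible for interior points on the scale $t\lesssim\delta^2$ is the one point that genuinely uses $\delta\ge r$ and the sign condition $c\le0$.
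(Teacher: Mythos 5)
Your proposal is correct and rests on the same key ingredient as the paper's proof, namely Aronson's interior Gaussian lower bound $K_D(x,y;t)\ge c_1 t^{-d/2}\e^{-c_2|x-y|^2/t}$ with constants depending only on $d$, $\LO$ and $\delta$, together with the trivial observation for $x\notin B_r$. Where you differ is in the endgame: the paper notes that $x\mapsto\int_{B_r}\e^{-c_2|x-y|^2/t}\,\dee y$ is radially symmetric and decreasing in $|x|$ (being a constant-coefficient heat evolution of radially decreasing data), bounds it below by its value at $|x|=r$, and after the rescaling $w=z/\sqrt{t}$ reduces everything to the fixed integral $\int_{B_1(\hat{u})}\e^{-c_2|w|^2}\,\dee w$; you instead restrict the integral to $E_x=B_{\sqrt{t}}(x)\cap B_r$, bound the Gaussian below by $\e^{-c_2}$ there, and verify geometrically that $|E_x|\ge 2^{-d}\omega_d\,t^{d/2}$, so that the powers of $t$ cancel. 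Both endings are valid and produce a $\kappa$ depending only on $d$, $\LO$ and $\delta$; yours is marginally more elementary (no appeal to radial monotonicity of the convolution) at the price of a two-case geometric argument, while the paper's yields a cleaner explicit constant. Two small imprecisions, neither fatal: you assert the Aronson bound for all $x,y\in\overline{B_{2r}}$ with $t\le r^2/8$, whereas the time restriction in \cite{Aro} is $t\le\min\{1,\dist^2(y,\p B_{2r})/8\}$, which gives $r^2/8$ only for $y\in B_r$ --- harmless, since you only integrate over $y\in E_x\subset B_r$; and your heuristic about the boundary correction $\Gamma-K_D$ being exponentially small is not actually needed, since Aronson's Theorem~9(iii) (applied with $\Om'=B_{2r}$, exactly as in the paper) already delivers the interior lower bound directly.
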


\begin{proof}
By  \cite[Theorem 8, Theorem 9 (iii)]{Aro} (with $\Om^\prime =B_{2r}$, $T=1$ and $\tau =0$  in the notation of that paper) we have the following lower bound on the heat kernel $K_{D}$ associated with the operator $\LO$ with Dirichlet boundary conditions:
\bs
K_{D}(x,y;t)\ge c_1t^{-d/2}\e^{-c_2|x-y|^2/t}
\es
for all $x,y\in B_{2r}$ and $0<t\le\min\{1,\dist^2(y,\p B_{2r})/8\}$, where $c_1$ and $c_2$ are positive constants depending only upon $d$, $\delta$ and $\LO$. In particular, for $y\in B_{r}$ we have  $\dist(y,\p B_{2r})\ge r$ and so
\bs
K_{D}(x,y;t)\ge c_1t^{-d/2}\e^{-c_2|x-y|^2/t}
\es
for all $x,y\in B_{r}$ and $0<t\le\min\{1,r^2/8\}=r^2/8$. Hence for all such $t$,
\bs
[S_{D}(t)\chi_r](x)=\int_{B_r}K_{D}(x,y;t)\,\dee y\ge c_1t^{-d/2}\int_{B_r}\e^{-c_2|x-y|^2/t}\,\dee y.
\es
The latter integral is simply a constant multiple of the representation of the solution of a heat equation of the form $u_t=C \Delta u$ on the whole space $\R^d$ with the radially symmetric, non-increasing initial data $\chi_{r}(x)$.  Consequently this integral is also radially symmetric and decreasing with $|x|$ and so for $|x|\le r$, choosing any unit vector $\hat{u}$, we can write
\bs
[S_{D}(t)\chi_r](x)\ge c_1t^{-d/2}\int_{B_r (r{\hat{u}})}\e^{-c_2|z|^2/t}\,\dee z
=c_1\int_{B_{r/\sqrt t}(\frac{r}{\sqrt{t}}{\hat{u}})}\e^{-c_2|w|^2}\,\dee w.
\es
Observing that for $r/\sqrt t\ge 1$ we have
$$
B_{r/\sqrt t}({\textstyle\frac{r}{\sqrt{t}}}{\hat{u}})\supseteq B_{1}({\hat{u}})
$$
it follows that
\bs
[S_{D}(t)\chi_r](x) \ge  c_1\int_{B_{1}({\hat{u}})}\e^{-c_2|w|^2}\,\dee w =:\kappa^\prime =\kappa^\prime \chi_r(x)
\es
for all $x\in B_{r}$ and $0<t\le \min\{r^2/8,r^2\}=r^2/8$.

Clearly for $x\not\in B_{r}$ we have $[S_{D}(t)\chi_{r}](x)\ge 0= \chi_{r}(x)$ and so the result
follows with $\kappa = \min\{ 1,\kappa^\prime\}$.
\end{proof}

Lemma~\ref{lem:SD} is central to our proof of non-uniqueness. Although elementary, similar versions have proved extremely powerful in establishing fundamental non-existence results for semilinear heat equations in Lesbesgue spaces. For example, a version was used in \cite{LRSV} to give a complete characterisation of those $f$ for which the local existence property holds, and another in \cite{LRS}  to establish instantaneous blow-up for singular initial data even when all solutions of the corresponding ODE exist globally in time.

We can now prove our main result.

\begin{theorem}
Assume {\rm (H1)-(H4)} hold. If $f$ does not satisfy the Osgood condition (\ref{osgood}) (i.e., $\int_0^\eps \dee u/f(u)  <\infty$)  then there exists a $T>0$ and a non-trivial, bounded generalised  solution $U$ of  {\rm (P)} on $[0,T]$ satisfying $U(x,t)>0$ on $Q_T$. Furthermore, if $q\in C^\A (\Om )$ and
$f\in C^\A ((0,M])$ (where $M=\|U\|_{\infty}$) for some $\A\in (0,1)$ then $U$ is a classical solution of {\rm (P)}.
\label{thm:possoln}
\end{theorem}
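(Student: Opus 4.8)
The plan is to exploit the failure of the Osgood condition (\ref{osgood}) at the level of the scalar \emph{ordinary} equation (\ref{ODE}) and to transplant the resulting non-trivial ODE branch into a genuine subsolution of the integral equation (\ref{eq:bounded}), before squeezing an actual solution between this subsolution and a constant supersolution by monotone iteration. The device that removes any concavity hypothesis on $f$ is a spatially \emph{separable} ansatz built on a characteristic function: if $\psi$ is the indicator of a ball and $v\ge0$ is scalar, then, since $\psi$ takes only the values $0$ and $1$ and $f(0)=0$, one has the exact identity $f(v\psi)=f(v)\psi$. No Jensen-type inequality, and hence no concavity, is ever needed.

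First I would fix a ball on which the weight is bounded below. Since $q\not\equiv0$ is continuous and non-negative by (H3), there is a point $x_0\in\Om$ and radii with $B_{3r}(x_0)\subset\Om$, $r<1$, and $q\ge q_0>0$ on $B:=B_r(x_0)$; write $\psi=\chi_B$. The estimate of Lemma~\ref{lem:SD} is proved from Aronson's Gaussian lower bound and is insensitive to the centre of the ball, so it gives $S_D(t)\psi\ge\kappa\psi$ on $(0,r^2/8]$. To reconcile the sign of $c$ --- which Lemma~\ref{lem:SD} restricts to $c\le0$ while the theorem does not --- I would, when $\lambda:=\sup_\Om c>0$, apply Lemmas~\ref{lem:SD} and~\ref{lem:Sorder} to the shifted operator $\LO-\lambda$, whose semigroup is $\e^{-\lambda t}S_\B(t)$; since $\lambda\ge0$ this yields $S_\B(t)\psi\ge\kappa\psi$ on $(0,r^2/8]$ for the original problem. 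Now let $v$ be the maximal solution of $\dot v=q_0\kappa\,f(v)$, $v(0)=0$; as the failure of (\ref{osgood}) is unaffected by the positive factor $q_0\kappa$, this $v$ is non-trivial with $v(t)>0$ for $t>0$. Setting $U_0(x,t):=v(t)\psi(x)$ and using $f(v\psi)=f(v)\psi$, $q\psi\ge q_0\psi$, the comparison $S_\B\ge S_D$ and the kernel bound, I obtain for $0\le t\le r^2/8$
\be
\int_0^t S_\B(t-s)[qf(U_0(s))]\,\dee s\ge q_0\kappa\,\psi\int_0^t f(v(s))\,\dee s=v(t)\psi=U_0,
\ee
so $U_0$ is a subsolution of (\ref{eq:bounded}) on $[0,T]$ for any $T\le r^2/8$.

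For the upper barrier I would use a constant-in-space supersolution. With $q_1:=\max_\Omb q$ and $\omega:=\max\{0,\sup_\Om c\}$ the maximum principle gives $S_\B(\tau)\mathbf 1\le\e^{\omega\tau}$, so if $W$ solves $\dot W=q_1\e^{\omega T}f(W)$, $W(0)=\eta$ for small $\eta>0$, then $\overline U(x,t):=W(t)$ satisfies $\int_0^tS_\B(t-s)[qf(W)]\,\dee s\le W(t)$ for $t\le T$. Choosing $T\le r^2/8$ small enough that $W$ exists and stays bounded by some $M$ on $[0,T]$, and arranging $q_1\e^{\omega T}>q_0\kappa$ (if necessary replacing $\kappa$ by a smaller constant in the definition of $v$, which only weakens the subsolution inequality), a differential-inequality comparison gives $v\le W$ and hence $U_0\le\overline U$. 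Since $f$ is non-decreasing and each $S_\B(t)$ is order preserving, the map $U\mapsto\int_0^tS_\B(t-s)[qf(U(s))]\,\dee s$ is monotone; iterating it from $U_0$ produces a non-decreasing sequence trapped between $U_0$ and $\overline U$, and monotone/dominated convergence delivers a limit $U$ with $U_0\le U\le M$ solving (\ref{eq:bounded}) --- a non-trivial bounded generalised solution.

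Positivity of $U$ on all of $Q_T$ follows from (\ref{eq:integral}): for $t>0$ the integrand is strictly positive on $B\times(0,t)$ --- there $q\ge q_0$, $U\ge U_0>0$, $f>0$, and $K_\B\ge K_D>0$ in the interior by Lemma~\ref{lem:Sorder} and Aronson's bound --- forcing $U(x,t)>0$ everywhere, and the closing sentence is then immediate from Remark~\ref{rem:classical}. The step I expect to demand the most care is the subsolution itself: one must keep the profile separable so that the concavity-free identity $f(v\psi)=f(v)\psi$ is available, respect the time window $t\le r^2/8$ on which the localised kernel bound holds, and feed in exactly the non-trivial ODE branch produced by the failure of (\ref{osgood}); disposing of the sign of $c$ against the hypothesis $c\le0$ of Lemma~\ref{lem:SD}, and keeping the iteration bounded on a common interval, are the remaining points to handle cleanly.
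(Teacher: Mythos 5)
Your proposal is correct and follows essentially the same route as the paper: the Gaussian lower bound $S_D(t)\chi_r\ge\kappa\chi_r$ of Lemma~\ref{lem:SD}, the separable subsolution $v(t)\chi_R(x)$ exploiting $f(v\chi)=f(v)\chi$ (which is exactly how the paper avoids concavity), the non-trivial ODE branch from the failure of (\ref{osgood}), and monotone iteration between this subsolution and a spatially constant supersolution. The only notable divergence is your treatment of the sign of $c$ --- absorbing the shift via $\e^{-\lambda t}S_\B(t)\ge\kappa$ and $\lambda\ge0$, rather than the paper's substitution $\tilde\LO=\LO-\sigma$, $\tilde f(u)=f(u)+\sigma u$ --- which is a valid and arguably tidier variant of the same idea.
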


\begin{proof}
Suppose initially that  $c(x)\le 0$. By (H3) there exist $\rho >0$, $\gamma >0$ and $x_0\in\Om$ such that $B_{3\rho} (x_0)\subset\Om$ and $q(x)\ge \gamma$ for all $x\in B_{3\rho} (x_0)$. Without loss of any generality we may assume that $x_0$ is the origin. Now choose $r$ as in Lemma \ref{lem:SD}, so that
\be
S_{D}(t)\chi_{r}\ge  \kappa \chi_{r},\qquad \forall t\le r^2/8.\label{Sbound2}
\ee
Setting $R=\min\{r,\rho\}$ we therefore have
\be
S_{D}(t)\chi_{R}\ge  \kappa \chi_{R},\qquad \forall t\le R^2/8.\label{Sbound3}
\ee
Now let $\mu (t)$ denote the unique local solution of the ODE 
\bs
\dot\mu = \kappa\gamma f(\mu ),\qquad \mu(0)=0
 \es
which exists and is positive for $t\in (0,T^*]$ for some $T^*>0$, i.e. $\mu (t)= \int_0^t \kappa\gamma f(\mu (s) )\, \dee  s $. The existence of such a $\mu$   follows by virtue of $f$ failing to satisfy the Osgood condition (\ref{osgood}). Setting $v(x,t)=\mu (t)\chi_{R}(x)$ it is clear that $v\in L^{\infty}(Q_{T^*})$. Furthermore, for $t\le T':=\min\{T^*,R^2/8\}$ we also have that
\bs
\int_0^t S_{\B}(t-s)[qf(v(s))]\, \dee  s&=&\int_0^t S_{\B}(t-s)[qf(\mu (s)\chi_{R})]\, \dee  s\\
&=&\int_0^t S_{\B}(t-s)[q\chi_{R}f(\mu (s))]\, \dee  s \ \text{($f(0)=0$)}\\
&=&\int_0^t f(\mu (s))S_{\B}(t-s)[q\chi_{R}]\, \dee  s \ \text{($S_{\B}$ linear)}\\
&\ge &\int_0^t \gamma f(\mu (s))S_{\B}(t-s)[\chi_{R}]\, \dee  s \ \text{($S_{\B}$ monotone)}\\
&\ge &\int_0^t \gamma f(\mu (s))S_{D}(t-s)[\chi_{R}]\, \dee  s \ \text{(Lemma \ref{lem:Sorder})}\\
&\ge &\chi_{R} \int_0^t \gamma \kappa  f(\mu (s))\, \dee  s \ \text{(Lemma \ref{lem:SD})}\\
&=&\mu (t)\chi_{R}=v  \ \text{(by definition of $\mu$)}
\es
and so $v$ is a  generalised  subsolution of (P) on $[0,T^\prime]$.

It is easy to see that $w(x,t)=t$ is a classical supersolution of (P) on $[0,\tau ]$ for any $\tau >0$ satisfying $f(\tau )\|q\|_{\infty}\le 1$, which is clearly possible by (H4). Furthermore, since $d\mu/dt \to 0$ as $t\to 0^+$, $\tau$ may also be chosen so that $v\le w$ on $[0,\tau ]$. Standard monotone iteration arguments then guarantee the existence of a bounded generalised  solution $U$ of (P) on $[0,T]$ satisfying $v\le U\le w$, where $T:=\min\{\tau ,T^\prime\}$. The positivity of
$U$ in $Q_T$ then follows from the integral representation of $U$ and the  positivity of $K_{\B}$. Finally, the regularity of $U$ follows from Remark \ref{rem:classical} when $q$ and $f$ are H\"{o}lder continuous.

For $c$ of indefinite sign let $\sigma =\|c\|_{\infty}$ and set $\tilde{c}(x)=c(x)-\sigma\le 0$ and $\tilde{f}(u)=f(u)+\sigma u$. The assumptions (H1) and (H4) are then satisfied with $c$ replaced by $\tilde{c}$ and $f$ replaced by $\tilde{f}$. Moreover, since   $\tilde{f}\ge f$, $\tilde{f}$ fails to satisfy
 the  Osgood condition (\ref{osgood}).
Hence, arguing as above, there exists a bounded, positive solution $\tilde{U}$ of the problem (P) with $\LO$ and $f$ replaced by $\tilde{\LO}:=\LO -\sigma$ and $\tilde{f}$, respectively. Clearly however, $\tilde{U}$ is a solution of  problem (P) with data $\LO$ and $f$ if and only if $\tilde{U}$ is a solution of  problem (P) with data $\tilde{\LO}$ and $\tilde{f}$, yielding the result.
\end{proof}

We can now combine Theorem~\ref{thm:possoln} and \cite[Theorem 1.4]{FW} to obtain the following  characterisation of uniqueness for (P). We point out that Theorem~\ref{thm:possoln} and Corollary~\ref{cor:main} below remain valid if hypothesis (H4) is replaced by a local one near zero, i.e., for
 $f:[0,M]\to [0,\infty )$.

\begin{corollary}
If {\rm (H1)-(H4)} hold then the following are equivalent:
\bi
\item[{\rm (i)}] $u=0$ is the unique bounded generalised  solution of the PDE {\rm (P)};
\item[{\rm (ii)}] $u=0$ is the unique non-negative solution of the ODE (\ref{ODE});
\item[{\rm (iii)}] $\displaystyle{\int_0^\eps \frac{\dee u}{f(u)}  =\infty}$ for some $\eps >0$.
\ei
\label{cor:main}
\end{corollary}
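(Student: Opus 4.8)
The plan is to recognise that the substantive analytic content has already been established, so that the corollary is a matter of assembling three facts into a single equivalence. The logical skeleton I would use is to prove (i)$\Leftrightarrow$(iii) purely from the PDE side, to prove (ii)$\Leftrightarrow$(iii) purely from classical ODE theory, and then to chain these two biconditionals together to obtain the full three-way equivalence (i)$\Leftrightarrow$(iii)$\Leftrightarrow$(ii). No genuinely new estimate is needed beyond Theorem~\ref{thm:possoln} and the cited sufficiency result of Fujita and Watanabe.

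For the equivalence (i)$\Leftrightarrow$(iii) I would argue both implications. The direction (iii)$\Rightarrow$(i), namely that the Osgood condition forces uniqueness of the trivial solution among bounded non-negative generalised solutions of (P), is exactly \cite[Theorem~1.4]{FW}; here I would verify only that the hypotheses (H1)--(H4) supply all the structural assumptions their sufficiency argument requires. The reverse direction I would prove by contraposition: if (iii) fails, i.e.\ $\int_0^\eps \dee u/f(u)<\infty$, then Theorem~\ref{thm:possoln} produces a bounded generalised solution $U$ with $U(x,t)>0$ on $Q_T$ for some $T>0$. Since $u\equiv 0$ is always a solution, this $U$ is a second, distinct bounded non-negative solution, so (i) fails. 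Taken together these give $\neg$(iii)$\Rightarrow\neg$(i) and (iii)$\Rightarrow$(i), hence (i)$\Leftrightarrow$(iii).

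The equivalence (ii)$\Leftrightarrow$(iii) is simply Osgood's classical uniqueness theorem for the scalar autonomous ODE~(\ref{ODE}) under (H4), already invoked in the Introduction via \cite{Osg}: the trivial solution is the unique non-negative local solution of $\dot u=f(u)$, $u(0)=0$ precisely when (\ref{osgood}) holds. Combining (i)$\Leftrightarrow$(iii) with (ii)$\Leftrightarrow$(iii) then closes the argument.

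The one point requiring genuine care --- the part I would treat as the main obstacle --- is reconciling the two PDE inputs so that they truly negate one another. I must ensure that the class of objects for which Theorem~\ref{thm:possoln} asserts existence coincides with the class for which \cite[Theorem~1.4]{FW} asserts uniqueness: both should be read as statements about bounded, non-negative generalised solutions in the sense of Definition~\ref{def:soln}, with Remark~\ref{rem:classical} bridging to the classical formulation when $q$ and $f$ are H\"older continuous. I would also be careful that uniqueness of the trivial solution is understood locally in time, so that non-uniqueness on some interval $[0,T]$, as delivered by Theorem~\ref{thm:possoln}, genuinely contradicts (i); this local reading is moreover what renders the remark following the corollary (that (H4) may be weakened to a hypothesis on $f:[0,M]\to[0,\infty)$ near zero) consistent with the whole argument.
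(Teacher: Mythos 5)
Your proposal is correct and follows exactly the route the paper intends: the equivalence (i)$\Leftrightarrow$(iii) is obtained by combining \cite[Theorem~1.4]{FW} for the forward direction with Theorem~\ref{thm:possoln} (by contraposition) for the reverse, while (ii)$\Leftrightarrow$(iii) is Osgood's classical criterion \cite{Osg} already recorded in the Introduction. Your additional care about matching the solution classes and the local-in-time reading of uniqueness is sensible but raises no issue beyond what the paper's framework already handles.
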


\section{Concluding Remarks}
We have obtained a simple necessary and sufficient condition on $f$ for uniqueness of the trivial solution in a semilinear parabolic equation
 with continuous, increasing nonlinearity $f$.  There were several key structural properties required to achieve this: (a) monotonicity of $f$;
 (b) semilinearity of the governing evolution equation; (c) monotonicity of the semigroup $S_\B$ (equivalently the kernel $K_\B$) with respect to the boundary data and its action on the underlying phase space $L^{\infty}(\Om )$ and (d) a lower bound on the action of the Dirichlet semigroup $S_D$ on characteristic functions (arising from a Gaussian lower bound on the Dirichlet kernel $K_D$). It seems reasonable to suggest that other evolution problems of the form
\bs
u^\prime = Au+f(u)
\es
having properties (a-d) would be amenable to the method employed here. For example, the fractional Laplacian $Au=-\left( -\Delta \right)^{s}$, $0<s <1$, would seem just such a case.

It would also be interesting to see if our method could be extended to continuous but non-monotone $f$ or to quasilinear operators such as the $p$-Laplacian $A(u)=\div \left(|\nabla u|^{p-2}\nabla u\right)$ ($p>1$) or  the porous medium operator $A(u)=\div \left( u^m\nabla u\right)$ ($m>0$). Whilst no integral equation formulation such as in (\ref{eq:integral}) or (\ref{eq:bounded}) exists in the quasilinear case it may still be possible to obtain non-uniqueness results via a weak formulation since monotonicity properties still hold in the weak sense (recalling that our subsolution in the proof of Theorem~\ref{thm:possoln} is not a classical subsolution, lacking as it does sufficient regularity).

\bibliographystyle{model1num-names}
\bibliography{<your-bib-database>}

\end{document}